\documentclass[12pt,a4paper]{article}

\usepackage{algorithm}
\usepackage{booktabs}
\usepackage{graphicx}
\usepackage{amsmath}
\usepackage{amssymb}
\usepackage{latexsym}
\usepackage{crop}
\usepackage{algorithmic,algorithm}
\usepackage{multirow}
\usepackage{bm}
\usepackage{bbm}
\usepackage{enumerate}
\usepackage{url}
\usepackage{array}
\usepackage{paralist}
\usepackage{diagbox}
\usepackage{wasysym}
\usepackage{booktabs}
\usepackage[dvipsnames]{xcolor}
\usepackage[colorlinks = true, pdfstartview = FitV, linkcolor = blue, citecolor = blue, urlcolor = blue]{hyperref}

\usepackage{listings}

%
%

\usepackage{rotating}

\usepackage[capitalise]{cleveref}
\crefname{equation}{}{}
\crefname{figure}{Figure}{Figures}
\creflabelformat{equation}{\textup{(#2#1#3)}}
\crefname{assumption}{Assumption}{Assumptions}
\crefname{condition}{Condition}{Conditions}

\usepackage{xspace}


\usepackage{fullpage}
\usepackage{multirow}


\usepackage[sort,nocompress]{cite}

\usepackage{arydshln}
\setlength\dashlinedash{1.2pt}
\setlength\dashlinegap{1.5pt}
\setlength\arrayrulewidth{0.3pt}

\usepackage{enumitem}
\setlist[enumerate,1]{leftmargin=*,wide=0em, noitemsep,nolistsep, label = {\bfseries \arabic*.}}
\setlist[itemize,1]{leftmargin=*,wide=0em, noitemsep,nolistsep}


\usepackage{pifont}
%
%







\makeatletter
\newcommand*{\transpose}{%
	{\mathpalette\@transpose{}}%
}
\newcommand*{\@transpose}[2]{%
	\raisebox{\depth}{$\m@th#1\intercal$}%
}
\makeatother







\newcommand*\xbar[1]{%
	\hbox{%
		\vbox{%
			\hrule height 0.5pt 
			\kern0.5ex
			\hbox{%
				\kern-0.1em
				\ensuremath{#1}%
				\kern-0.1em
			}%
		}%
	}%
} 


\definecolor{forestgreen}{rgb}{0.13, 0.55, 0.13}

\definecolor{amber}{rgb}{1.0, 0.75, 0.0}

\definecolor{bananayellow}{rgb}{.8, 0.6, 0}




\newcounter{comment}\setcounter{comment}{0}


\usepackage{amsthm}
\usepackage[framemethod=TikZ]{mdframed}

\mdfdefinestyle{theoremstyle}{%
	linewidth = 1pt,%
	roundcorner = 10pt,%
	leftmargin = 0,%
	rightmargin = 0,%
	backgroundcolor = cyan!3,%
	outerlinecolor = magenta!70!black,%
	splittopskip = \topskip,%
	ntheorem = true,%
}
\mdtheorem[style=theoremstyle]{claim}{Claim}

\newmdtheoremenv[%
linewidth = 1pt,%
roundcorner = 10pt,%
leftmargin = 0,%
rightmargin = 0,%
backgroundcolor = green!3,%
outerlinecolor = blue!70!black,%
splittopskip = \topskip,%
ntheorem = true,%
]{theorem}{Theorem}

\newmdtheoremenv[%
linewidth = 1pt,%
roundcorner = 10pt,%
leftmargin = 0,%
rightmargin = 0,%
backgroundcolor = green!3,%
outerlinecolor = blue!70!black,%
splittopskip = \topskip,%
ntheorem = true,%
]{corollary}{Corollary}

\newmdtheoremenv[%
linewidth = 1pt,%
roundcorner = 10pt,%
leftmargin = 0,%
rightmargin = 0,%
backgroundcolor = green!3,%
outerlinecolor = blue!70!black,%
splittopskip = \topskip,%
ntheorem = true,%
]{lemma}{Lemma}

\newmdtheoremenv[%
linewidth = 1pt,%
roundcorner = 10pt,%
leftmargin = 0,%
rightmargin = 0,%
backgroundcolor = yellow!3,%
outerlinecolor = blue!70!black,%
splittopskip = \topskip,%
ntheorem = true,%
]{definition}{Definition}

\newmdtheoremenv[%
linewidth = 1pt,%
roundcorner = 10pt,%
leftmargin = 0,%
rightmargin = 0,%
backgroundcolor = green!3,%
outerlinecolor = blue!70!black,%
splittopskip = \topskip,%
ntheorem = true,%
]{proposition}{Proposition}

\newmdtheoremenv[%
linewidth = 1pt,%
roundcorner = 10pt,%
leftmargin = 0,%
rightmargin = 0,%
backgroundcolor = green!3,%
outerlinecolor = blue!70!black,%
splittopskip = \topskip,%
ntheorem = true,%
]{condition}{Condition}

\newmdtheoremenv[%
linewidth = 1pt,%
roundcorner = 10pt,%
leftmargin = 0,%
rightmargin = 0,%
backgroundcolor = green!3,%
outerlinecolor = blue!70!black,%
splittopskip = \topskip,%
ntheorem = true,%
]{assumption}{Assumption}

\theoremstyle{definition}
\newmdtheoremenv[%
linewidth = 1pt,%
roundcorner = 10pt,%
leftmargin = 0,%
rightmargin = 0,%
backgroundcolor = blue!3,%
outerlinecolor = blue!70!black,%
splittopskip = \topskip,%
ntheorem = true,%
]{example}{Example}

\theoremstyle{definition}
\newmdtheoremenv[%
linewidth = 1pt,%
roundcorner = 10pt,%
leftmargin = 0,%
rightmargin = 0,%
backgroundcolor = red!3,%
outerlinecolor = blue!70!black,%
splittopskip = \topskip,%
ntheorem = true,%
]{remark}{Remark}






\usepackage{tikz}
\usepackage{xparse}

\NewDocumentCommand\DownArrow{O{2.0ex} O{black}}{%
	\mathrel{\tikz[baseline] \draw [<-, line width=0.5pt, #2] (0,0) -- ++(0,#1);}
}

\usepackage{listings} 

\definecolor{mygreen}{rgb}{0,0.6,0}
\definecolor{mygray}{rgb}{0.5,0.5,0.5}
\definecolor{mymauve}{rgb}{0.58,0,0.82}

\lstset{ %
  backgroundcolor=\color{white},   
  basicstyle=\footnotesize,        
  breakatwhitespace=false,         
  breaklines=true,                 
  captionpos=b,                    
  commentstyle=\color{mygreen},    
  deletekeywords={...},            
  escapeinside={\%*}{*)},          
  extendedchars=true,              
  frame=single,	                   
  keepspaces=true,                 
  keywordstyle=\color{blue},       
  language=Octave,                 
  morekeywords={*,...},           
  numbers=left,                    
  numbersep=5pt,                   
  numberstyle=\tiny\color{mygray}, 
  rulecolor=\color{black},         
  showspaces=false,                
  showstringspaces=false,          
  showtabs=false,                  
  stepnumber=2,                    
  stringstyle=\color{mymauve},     
  tabsize=2,	                   
  title=\lstname                   
}




\usepackage{dsfont}

\usepackage[latin1]{inputenc}
\usepackage{amsmath}
\usepackage{amsfonts}
\usepackage{amssymb}
\usepackage{makeidx}
\usepackage{graphicx}
\usepackage{caption}
\usepackage{subcaption}
\usepackage{framed}
\usepackage{booktabs,array}
\usepackage{xcolor}


\begin{document}
	

\title{Surprise Maximization: \\ A Dynamic Programming Approach}
\author{
	Ali Eshragh\thanks{School of Mathematical and Physical Sciences, University of Newcastle, NSW, Australia, and International Computer Science Institute, Berkeley, CA, USA. Email:  \tt{ali.eshragh@newcastle.edu.au}}
}
\date{}
\maketitle


\begin{abstract}
	Borwein et al. \cite{Borwein2000} solved a ``surprise maximization'' problem by applying results from convex analysis and mathematical programming. Although, their proof is elegant, it requires advanced knowledge from both areas to understand it. Here, we provide another approach to derive an optimal solution of the problem by utilizing dynamic programming. 
\end{abstract}

\section{Introduction}

Borwein et al. \cite{Borwein2000} introduced an optimization problem on maximizing the expected value of the surprise function. More precisely, they exploited results from convex analysis and mathematical programming to find an optimal solution of the following non-linear programming model, called \textsf{SM1}:
\begin{align*}
	\medskip \mbox{maximize}\ \ S_m(p_1,\ldots,p_m) & = \displaystyle{\sum_{j=1}^m p_j \log \frac{p_j}{\frac{1}{m} \sum_{i=j}^m p_i}-\sum_{j=1}^m p_j}\\
	\medskip \mbox{subject to}\ \ \ \ \ \ \ \ \ \ \ \ \sum_{j=1}^m p_j & = 1\,, \\
	\medskip  \mbox{and}\ \ \ \ \ \ \ \ \ \ \ \ \ \ \ \ \ \ \ \ \ \ \ \ \ p_j & \geq 0\ \ \ \mbox{for }j=1,\ldots,m\,.
 \end{align*}

Here, a \emph{dynamic programming} approach is utilized to find an optimal solution of the \textsf{SM1} model. First of all, we simplify the objective function $S_m(p_1,\ldots,p_m)$ as follows: 
\begin{align*}
	\medskip S_m(p_1,\ldots,p_m) & = \displaystyle{\sum_{j=1}^m p_j \left(\log p_j-\log\left(\sum_{i=j}^m p_i\right)\right) +\log m -1}
\end{align*}

\noindent Without loss of generality, we can disregard the constant term $\log m -1$ and carry out our optimisation over terms involving the variables $p_j$. Thus, we focus on the following optimisation model, called \textsf{SM2}: 
\begin{align*}
	\medskip \mbox{maximize}\ \ \widetilde{S_m}(p_1,\ldots,p_m) & := \displaystyle{\sum_{j=1}^m p_j \left(\log p_j-\log\left(\sum_{i=j}^m p_i\right)\right)}\\
	\medskip \mbox{subject to}\ \ \ \ \ \ \ \ \ \ \ \ \sum_{j=1}^m p_j & = 1\,, \\
	\medskip  \mbox{and}\ \ \ \ \ \ \ \ \ \ \ \ \ \ \ \ \ \ \ \ \ \ \ \ \ p_j & \geq 0\ \ \ \mbox{for }j=1,\ldots,m\,.
 \end{align*}

Now consider the following counterpart investment problem: Suppose that we are given $1$ unit of money to invest in $m$ consecutive days. If we spend $p_1,\ldots,p_m$ units of money in days $1,\ldots,m$, then the total return of this investment will be given by $\widetilde{S_m}(p_1,\ldots,p_m)$. We want to find an optimal investment policy such that the total return over $m$ days is maximised. Clearly, \textsf{SM2} solves this optimal investment problem (This problems is called \emph{optimal resource allocation problem} in the literature).

\section{Dynamic Programming}

We apply a \emph{dynamic programming} approach to solve \textsf{SM2}. In this model, the \emph{stage} is each investment opportunity (i.e., day) and the state of the system is the remained amount of money to invest in the subsequent stages. Let $V_j(r)$ denote the maximum total return over days $j,\ldots,m$ while $r$ units of money remained (i.e., $1-r$ units have been already spent in days $1,\ldots,j-1$). The \emph{Bellman optimality equation} is given as follows: 
\begin{align}\label{EquBOE}
	\begin{cases}
		\medskip \displaystyle{V_j(r)\ =\ \max_{0\leq x \leq r} \{x\log x -x\log r + V_{j+1}(r-x)\}\ \ \ \mbox{for}\ j=1,\ldots,m-1}, \\
		\medskip V_m(r)\ =\ 0.
	\end{cases}
\end{align}
Let $p_j^*(r)$ denote an optimal investment policy in day $j$ when the stage of the system is $r$. Obviously, we have $p_m^*(r)=r$. So, the optimity equation \cref{EquBOE} for $j=m-1$ is solved as follows. 
\begin{align*}
	\medskip V_{m-1}(r) & = \displaystyle{\max_{0\leq x \leq r} \{x\log x -x\log r + V_{m}(r-x)\}} \\
	\medskip & = \displaystyle{\max_{0\leq x \leq r} \{x\log x -x\log r \}}\,.
\end{align*}
Since the function $h(x):=x\log x -x\log r$ is a convex function over interval $[0,r]$, its maximum coincides with its extremum. Thus, 
\begin{align}
	\medskip \label{sol:industion_hyp1} p_{m-1}^*(r) & = r e^{-1} \\
	\medskip \label{sol:industion_hyp2} V_{m-1}(r) & = -p_{m-1}^*(r)\ =\ -r e^{-1}.
\end{align}

Solving the optimality equation \eqref{EquBOE} for $j=m-2$ gives a trend in the optimal investment policy, summarized in following theorem.
\begin{theorem}
For the optimality equation \eqref{EquBOE},
\begin{align}\label{sol:opt_pol}
	\medskip p_{j}^*(r) & = r e^{-\gamma_j},
\end{align}
where 
\begin{align}\label{EquGam}	
	\begin{cases}
		\medskip \displaystyle{\gamma_{j-1}\ =\ \gamma_j+e^{-\gamma_j}\ \ \ \mbox{for}\ j=1,\ldots,m-1}, \\
		\medskip \gamma_m\ =\ 0,
	\end{cases}
\end{align}
is an optimal investment policy. Moreover, the optimal value is given by
\begin{align}\label{sol:opt_val}
	\medskip V_{j}(r) & = -\sum_{i=j}^{m-1} p_i^*(r).
\end{align}
\end{theorem}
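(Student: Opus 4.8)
The plan is to prove \cref{sol:opt_pol} and \cref{sol:opt_val} simultaneously by backward induction on $j$, running from $j=m$ down to $j=1$. The base cases are already in hand: \cref{sol:industion_hyp1,sol:industion_hyp2} establish the claim at $j=m-1$ (note that reading \cref{EquGam} at the boundary gives $\gamma_{m-1}=\gamma_m+e^{-\gamma_m}=1$, so $re^{-\gamma_{m-1}}=re^{-1}$), while $V_m\equiv 0$ matches the empty sum in \cref{sol:opt_val}. The structural observation that makes the induction go through is that each value function is \emph{linear and homogeneous} in the state: assuming \cref{sol:opt_val} holds at stage $j+1$ gives
\[
V_{j+1}(s)\ =\ -\sum_{i=j+1}^{m-1} p_i^*(s)\ =\ -\Big(\sum_{i=j+1}^{m-1} e^{-\gamma_i}\Big)\, s\ =:\ -c_{j+1}\, s,
\]
so that the entire future collapses to a single nonnegative constant $c_{j+1}$.

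With this in hand the inductive step reduces to a one-variable problem. Substituting $V_{j+1}(r-x)=-c_{j+1}(r-x)$ into \cref{EquBOE} gives the stage objective $f(x)=x\log x - x\log r - c_{j+1}(r-x)$, whose stationarity condition $f'(x)=\log(x/r)+1+c_{j+1}=0$ yields the candidate policy $x = r\,e^{-(1+c_{j+1})}$. It therefore remains only to identify $1+c_{j+1}=\gamma_j$, which is exactly where the recursion \cref{EquGam} enters.

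The crux is a telescoping identity. Rewriting \cref{EquGam} as $e^{-\gamma_i}=\gamma_{i-1}-\gamma_i$ and summing,
\[
c_{j+1}\ =\ \sum_{i=j+1}^{m-1} e^{-\gamma_i}\ =\ \sum_{i=j+1}^{m-1}(\gamma_{i-1}-\gamma_i)\ =\ \gamma_j-\gamma_{m-1}\ =\ \gamma_j-1,
\]
using $\gamma_{m-1}=1$ from the base case. Hence $1+c_{j+1}=\gamma_j$ and $p_j^*(r)=re^{-\gamma_j}$, which is \cref{sol:opt_pol}. To close the induction on the value I substitute this optimiser back into $f$: since $\log p_j^*(r)=\log r-\gamma_j$, the first two terms collapse to $-\gamma_j\,p_j^*(r)$, and combining with $-c_{j+1}\big(r-p_j^*(r)\big)$ together with $c_{j+1}=\gamma_j-1$ makes the $\gamma_j\,p_j^*(r)$ contributions cancel, leaving $V_j(r)=-p_j^*(r)-c_{j+1}r=-\sum_{i=j}^{m-1}p_i^*(r)$, which is \cref{sol:opt_val} at stage $j$.

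I expect the main difficulty to be bookkeeping rather than conceptual: pinning down the index ranges in the telescoping sum and confirming that the boundary value $\gamma_{m-1}=1$ is consistent with the stated recursion (\cref{EquGam} must be read so that it also produces $\gamma_{m-1}$ from $\gamma_m=0$, i.e.\ the range should effectively include $j=m$). A secondary point worth verifying is that the stationary point $x=re^{-(1+c_{j+1})}$ is both feasible---immediate, since $1+c_{j+1}>0$ forces $0<x<r$---and the relevant extremum selected in \cref{EquBOE}, exactly as was done in the $j=m-1$ case already treated in the excerpt.
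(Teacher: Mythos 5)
Your proof is essentially the paper's own argument: backward induction on the stage, the observation that the continuation value is linear in the state, the telescoping of \eqref{EquGam} to get $\sum_{i}e^{-\gamma_i}=\gamma_{j}-1$ (the paper phrases this as ``summing up both sides of \eqref{EquGam}''), and selection of the interior stationary point $x^*=re^{-\gamma_j}$ of the resulting one-variable problem. If anything you spell out two steps the paper leaves to the reader (the derivative computation and the back-substitution verifying \eqref{sol:opt_val}), and your deferral to the base case on why the stationary point is the extremum taken in \eqref{EquBOE} mirrors exactly the paper's own treatment of $j=m-1$.
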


\begin{proof}
We prove this theorem by induction. It is readily seen that \eqref{sol:industion_hyp1} and \eqref{sol:industion_hyp2} satisfies \eqref{sol:opt_pol} and \eqref{sol:opt_val} for $j=m-1$, respectively. Now, assume that the latter optimal policy and optimal value are correct for $j\geq k$ and we show that they hold for $j=k-1$, as well. By considering the induction assumption, we have
\begin{align*}
	\medskip V_{k-1}(r) & = \displaystyle{\max_{0\leq x \leq r} \{x\log x -x\log r + V_{k}(r-x)\}} \\
	\medskip  & = \displaystyle{\max_{0\leq x \leq r} \{x\log x -x\log r -\sum_{i=k}^{m-1} p_i^*(r-x)\}} \\
	\medskip  & = \displaystyle{\max_{0\leq x \leq r} \{x\log x -x\log r -\sum_{i=k}^{m-1} (r-x) e^{-\gamma_i}\}}  \\
	\medskip  & = \displaystyle{\max_{0\leq x \leq r} \{x\log x -x\log r -(r-x) (\gamma_{k-1}-1)\}},
\end{align*}
where the last equality is derived by summing up both sides of \eqref{EquGam} over $j=k,\ldots,m-1$. One can see that the latter univariate optimization problem achieves its maximum at $x^* = r e^{-\gamma_{k-1}}$, and the corresponding optimal value $V_{k-1}(r)$ equals to $-\sum_{i=k-1}^{m-1} p_i^*(r)$\,. This completes the proof.
\end{proof}

\begin{corollary}
	An optimal solution of the model \textsf{SM2} is given by:
	\begin{align*}
		p_j^* & = \begin{cases}
			\medskip p_1^*(1) & \mbox{for}\ j=1, \\
			\medskip p_j^*(1-\sum_{i=1}^{j-1}p_i^*) & \mbox{for}\ j=2,\ldots,m.
		\end{cases}
	\end{align*}	
\end{corollary}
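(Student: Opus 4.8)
The plan is to read the Corollary as the forward (simulation) phase of the dynamic program: the Theorem has already solved the backward recursion \eqref{EquBOE}, producing the optimal stage-$j$ policy $p_j^*(r) = r e^{-\gamma_j}$ for every admissible state $r$, and the Corollary merely traces out the trajectory these policies induce when started from the full budget. So the work is to certify that this trajectory is feasible for \textsf{SM2} and attains the optimal value $V_1(1)$.

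First I would pin down the dictionary between \textsf{SM2} and \eqref{EquBOE}. Writing $r_j \defeq \sum_{i=j}^m p_i$ for the budget remaining at the start of day $j$, the constraint $\sum_{j=1}^m p_j = 1$ is exactly $r_1 = 1$, the transition $r_{j+1} = r_j - p_j$ holds by definition, and the objective rearranges as $\widetilde{S_m} = \sum_{j=1}^m \bigl(p_j \log p_j - p_j \log r_j\bigr)$. Each summand is then the immediate reward $x\log x - x\log r$ of \eqref{EquBOE} with $x = p_j$ and $r = r_j$, so \textsf{SM2} is precisely the total-reward maximization whose value function is $V_j$; in particular its optimal value is $V_1(1)$.

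Next I would construct the trajectory. Starting in state $r_1 = 1$ and applying the Theorem's optimal policy gives $p_1^* = p_1^*(1)$; advancing the state to $r_2 = 1 - p_1^*$ and again applying the optimal policy gives $p_2^* = p_2^*(1 - p_1^*)$, and an easy induction yields $r_j = 1 - \sum_{i=1}^{j-1} p_i^*$ together with $p_j^* = p_j^*\bigl(1 - \sum_{i=1}^{j-1} p_i^*\bigr)$, which is the stated formula. Feasibility is immediate: $p_j^* = r_j e^{-\gamma_j} \ge 0$, and since $p_m^*(r) = r$ the final stage empties the budget, forcing $\sum_{j=1}^m p_j^* = 1$.

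The one point requiring genuine justification is that greedily chaining the stage-wise optimizers returns a \emph{globally} optimal \textsf{SM2} solution. This is the principle of optimality, and it applies here because the scalar state $r$ is a sufficient statistic for the tail objective: the additive decomposition above shows that the reward collected from day $j$ onward depends on the past only through the remaining budget $r_j$. Hence the optimal continuation from $r_j$ is history-independent, so the constructed trajectory achieves $V_1(1)$ and is optimal. I expect this sufficiency check to be the main (if routine) obstacle; once it is in place the remaining algebra is a direct transcription of the Theorem.
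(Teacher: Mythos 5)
Your argument is correct, and it is exactly the forward-recovery step that the paper relies on: the paper states this corollary without any proof, treating it as an immediate consequence of the theorem, and your write-up simply makes explicit the identification of $\widetilde{S_m}$ with the total reward of the dynamic program (via $r_j = \sum_{i=j}^m p_i$) and the principle-of-optimality justification for chaining the stagewise policies from $r_1 = 1$. Nothing in your proposal deviates from the paper's intended reasoning; it just supplies the routine details the paper omits.
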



\end{document}